\documentclass[12pt]{amsart}
\usepackage{amssymb}
\usepackage{palatino}
\input amssym.def

\usepackage{longtable}
\usepackage{tcolorbox}

\allowdisplaybreaks

\usepackage[colorinlistoftodos]{todonotes}

\usepackage{amsmath, amsfonts}
\usepackage{amssymb}
\usepackage{amscd}
\usepackage[mathscr]{eucal}
\usepackage{palatino}
\usepackage{tabularx}
\usepackage{graphicx}
\usepackage{adjustbox}
\setlength{\voffset}{-1cm} \setlength{\hoffset}{-2cm}
\setlength{\textwidth}{6.3in} \setlength{\textheight}{9.3in}

\setlength{\abovedisplayshortskip}{3mm}
\setlength{\belowdisplayshortskip}{3mm}

\newfont{\cyrr}{wncyr10}

\newtheorem{thm}{Theorem}
\newtheorem{lem}{Lemma}
\newtheorem{cor}{Corollary}

\newcommand{\thmref}[1]{Theorem~\ref{#1}}

\newcommand{\lemref}[1]{Lemma~\ref{#1}}

\newcommand{\corref}[1]{Corollary~\ref{#1}}

\newcommand{\mbb}{\mathbb}

\newcommand{\psmb}{\left( \begin{smallmatrix}}
\newcommand{\psme}{ \end{smallmatrix} \right) }

\usepackage[utf8]{inputenc}

\thanks{\textit{Acknowledgment:}  The first author is supported by INSA Senior Scientist award.  The second author is supported by NBHM postdoctoral fellowship with grant number 0204/19/2017/R\& D-II/10383 at IISc Bangalore. }

\begin{document}

\title[Extension of Laguerre polynomials with negative arguments II]{Extension of Laguerre polynomials with negative arguments II}

\author{T. N. Shorey and Sneh Bala Sinha}

\address{T. N. Shorey \hfill Sneh Bala Sinha \newline
NIAS, Bangalore,   \hfill IISc Bangalore, \newline
560012, India.  \hfill   Department of Mathematics, \newline
$\phantom{mmmmmmmmmm}$ \hfill 560012, India. }

\email{shorey@math.iitb.ac.in} \hfill \email{24.sneh@gmail.com  ;
snehasinha@iisc.ac.in} 

\keywords{}

\subjclass{11A41, 11B25, 11N05, 11N13, 11C08, 11Z05}

\keywords{Generalised Laguerre polynomials, Irreducibility, Primes, Valuations.}

\maketitle  

\begin{abstract}
For $n \geq 3$ and $s \leq 92$, it is proved in \cite{ShSi} that, except for finitely many pairs $(n, s), G_1(x) = G_1(x, n, s) $ is either irreducible or linear factor times an irreducible polynomial. If $s \leq 30$, we determine here explicitely the set of pairs $(n, s)$ in the above assertion. This implies a new proof of the result of Nair and Shorey \cite{NaSh1} that $G_1(x)$ is irreducible for $s \leq 22$.
\end{abstract} 

\section{\bf Introduction}

This is a continuation of \cite{ShSi}. Therefore we shall follow the notations of \cite{ShSi} but we shall recall here the key notations and key results from \cite{ShSi}. The generalised Laguerre polynomial of degree $n$ with negative argument is
$$
L_n^{(\alpha)} (x) = \sum\limits_{j=0}^{n} \frac{(\alpha+n) \dots (\alpha+j+1)}{ (n-j)!}\frac{(-x)^j}{j!}
$$
where $\alpha $ is negative. Then for $\alpha = -n-s-1$ where $s$ is a non-negative integer, we have
\begin{align*}
g(x) := g(x, n, s) = (-1)^n L_n^{(-n-s-1)}(x) & = \sum\limits_{j=0}^{n}  a_j \frac{x^j}{j!} \label{new_eq_1}  
\end{align*}
where $ a_j= {n+s -j \choose n-j }$ for $ 0 \leq j \leq n$.

Thus $a_n = 1$ and $  a_0 = {n+s \choose n} = \frac{(n+1) \dots (n+s)}{s!}$ and
\begin{equation*}
G(x) :=G(x,n,s )  =  \sum\limits_{j=0}^{n}  \pi_j \frac{x^j}{j!}  \qquad \text{ where} \quad \pi_j = b_ja_j
\end{equation*}
such that $b_j \in \mbb Z$ for $0 \leq j \leq n$ with $|b_0| =1$, $|b_n| = 1$. For $k \geq 1$ we say we have $(n, k, s)$ if $G(x) = G( x, n, s)$ has a factor of degree $k$ and we do not have $(n, k, s)$ if $G(x)$ has no factor of degree $k$. Next we write 
$$
g_1(x) = n! g(x), \quad G_1(x) = n! G(x). 
$$
Schur proved that  $G_1(x) $ with $s=0$ is irreducible. Therefore we always assume that $s > 0$.

\section{Lemmas}

\begin{lem}\label{FFL}
Let $k$ and $l$ be integers with $ k > l \geq 0$. Suppose that $h (x) = \sum\limits_{j=0}^{n} b_j x^j$ and $p$ prime such that $ p \nmid b_n$ and $p \mid b_j$ for $0 \leq j < n-l$ and the right most edge of the Newton polygon for $h(x)$ with respect to $p$ has slope less than $\frac{1}{k}$. Then for any $a_0, a_1, \dots, a_n \in \mbb Z$ with $|a_0| = |a_n| = 1$, the polynomial $f(x) = \sum\limits_{j=0}^{n} a_j b_j x^j \in \mbb Z[x]$ can not have a factor with degree in the interval $[l+1, k]$.
\end{lem}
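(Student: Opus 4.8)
The plan is to argue by contradiction: assuming $f$ has a factor $w$ of degree $d$ with $l+1\le d\le k$, I would extract two incompatible statements about the $p$-adic valuation of the constant term $w(0)$. Throughout I work in $\overline{\mathbb{Q}_p}$ and may assume $f(0)\neq0$ (equivalently $b_0\neq0$, which is needed for the rightmost edge to be defined). First I record the consequences of the normalisation $|a_0|=|a_n|=1$: writing $f(x)=\sum_{j=0}^n c_jx^j$ with $c_j=a_jb_j$, one has $v_p(c_0)=v_p(b_0)$ and $v_p(c_n)=v_p(b_n)=0$, while $v_p(c_j)=v_p(a_j)+v_p(b_j)\ge v_p(b_j)$ for all $j$. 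In particular the leading coefficient of $f$ is a $p$-adic unit, so in any factorisation $f=wu$ over $\mathbb{Z}$ both $w$ and $u$ have leading coefficient $\pm1$; thus $v_p(\mathrm{lead}(w))=0$ and $\deg\overline{w}=d$ after reduction modulo $p$.

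The first statement comes from the slope hypothesis. Since $v_p(c_j)\ge v_p(b_j)$ with equality at $j=0$ and $j=n$, the Newton polygon of $f$ lies on or above that of $h$ and shares both endpoints $(0,v_p(b_0))$ and $(n,0)$. Comparing the two convex polygons through their common left endpoint, the most negative slope of the polygon of $f$ is at least that of $h$, so the largest $p$-adic valuation occurring among the roots of $f$ is at most the largest occurring among the roots of $h$. The hypothesis that the rightmost edge has slope $<1/k$ says precisely that this largest valuation is $<1/k$; hence every root of $f$ has valuation $<1/k$. Consequently, if $\rho_1,\dots,\rho_d$ are the roots of $w$, then
\[
v_p(w(0))=v_p(\mathrm{lead}(w))+\sum_{i=1}^d v_p(\rho_i)=\sum_{i=1}^d v_p(\rho_i)<\frac{d}{k}\le 1,
\]
and since $v_p(w(0))$ is a non-negative integer this forces $p\nmid w(0)$.

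The second statement is an elementary congruence count. Because $p\mid b_j$ (hence $p\mid c_j$) for $0\le j<n-l$ while $p\nmid c_n$, reduction modulo $p$ gives $\overline{f}=x^{\,n-l}\,\overline{\psi}$ with $\deg\overline{\psi}=l$. From $\overline{f}=\overline{w}\,\overline{u}$ in $\mathbb{F}_p[x]$ and unique factorisation, $\overline{w}=x^{a}\theta$ with $\theta\mid\overline{\psi}$, so $\deg\theta\le l$ and $a=\deg\overline{w}-\deg\theta=d-\deg\theta\ge d-l\ge1$. Thus $x\mid\overline{w}$, i.e.\ $p\mid w(0)$, contradicting the previous paragraph. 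This rules out a factor of degree in $[l+1,k]$.

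The step I expect to be delicate is the transfer of the slope condition from $h$ to $f$: the multipliers $a_j$ can raise the valuations of the middle coefficients arbitrarily, so the Newton polygons of $f$ and $h$ need not coincide. The point that makes the argument go through is that $|a_0|=|a_n|=1$ pins down both endpoints of the polygon, after which the inequality $v_p(c_j)\ge v_p(b_j)$ only pushes the polygon of $f$ upward; comparing the two convex polygons through their common endpoints shows that the extreme slope controlling the largest root valuation can only decrease, which is precisely the direction needed. The remaining care is bookkeeping: checking that the rightmost edge is the one governing the largest root valuation in the convention of \cite{ShSi}, and disposing of the degenerate case $b_0=0$.
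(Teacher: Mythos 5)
The paper gives no proof of this lemma at all: it is imported from the cited source \cite{FFL} (Filaseta--Finch--Leidy), so there is no internal argument to compare against. Your proof is precisely the standard Filaseta argument used there --- compare the Newton polygon of $f$ with that of $h$ through their common endpoints to conclude every root of $f$ has valuation less than $\frac{1}{k}$, deduce $p \nmid w(0)$ for a putative factor $w$ of degree $d \in [l+1,k]$, and contradict this by reduction modulo $p$, where $\overline{f} = x^{n-l}\overline{\psi}$ forces $x \mid \overline{w}$ --- and it is correct, including the two delicate points you flag (the endpoint-pinning that makes the polygon comparison one-sided, and the convention that the rightmost edge governs the largest root valuation). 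One small slip: from $f = wu$ over $\mathbb{Z}$ you cannot conclude that $w$ and $u$ have leading coefficients $\pm 1$, since the leading coefficient of $f$ is $a_n b_n = \pm b_n$, which is only assumed prime to $p$, not equal to $\pm 1$; but coprimality to $p$ already yields $v_p(\mathrm{lead}(w)) = 0$ and $\deg \overline{w} = d$, which is all your argument actually uses, so the proof stands after rewording that sentence.
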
 

The next result is Lemma 1 from \cite{ShSi}.

\begin{lem}\label{lem_k_eq_1}
Assume that $G_1(x) $ has a factor of degree $1$. Then 
$$
n \leq s^{\pi(s)}.
$$
\end{lem}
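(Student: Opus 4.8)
The plan is to argue by contraposition, using a single prime extracted from the factorization of $n$ and then invoking Lemma \ref{FFL} with $k=1$ and $l=0$. First I would put $g_1(x)=\sum_{j=0}^n d_j x^j$ with $d_j=a_j\,n!/j!$, so that $G_1(x)=\sum_{j=0}^n b_j d_j x^j$ fits the template of Lemma \ref{FFL} (the lemma's $a_j,b_j$ being played by our $b_j,d_j$, and $|b_0|=|b_n|=1$ as required). Writing $i=n-j$ gives the closed form $d_{n-i}=\binom{s+i}{i}\,\frac{n!}{(n-i)!}$, which I will use throughout; note $d_n=1$, so the condition $p\nmid d_n$ of Lemma \ref{FFL} holds automatically for every prime $p$.

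The key intermediate assertion I would isolate is: if some prime power $p^{\alpha}$ with $p^{\alpha}>s$ exactly divides $n$, then $G_1$ has no linear factor. Granting this, a linear factor of $G_1$ forces $p^{v_p(n)}\le s$ for every prime $p$; in particular $v_p(n)=0$ for $p>s$, so $n=\prod_{p\le s}p^{v_p(n)}\le\prod_{p\le s}s=s^{\pi(s)}$, which is the claim.

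To prove the assertion I would apply Lemma \ref{FFL} with this $p$, checking its two hypotheses. The divisibility $p\mid d_j$ for all $0\le j<n$ is immediate: since $p\mid n$, the factor $n$ occurs in $\frac{n!}{(n-i)!}=n(n-1)\cdots(n-i+1)$ for every $i\ge 1$, so $p\mid d_{n-i}$ for all $i$. The remaining hypothesis, that the right-most edge of the Newton polygon of $g_1$ has slope less than $1$, amounts (the edge ending at $(n,0)$, all other points lying strictly above the axis) to exhibiting a single index $i$ with $v_p(d_{n-i})<i$.

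The hard part will be this slope condition, i.e.\ choosing the right $i$, and this is where the hypothesis $p^{\alpha}>s$ enters. I would take $i=p^{\alpha}$ (legitimate since $p^{\alpha}\mid n$ gives $p^{\alpha}\le n$) and compute $v_p(d_{n-p^{\alpha}})=v_p\!\big(\tfrac{n!}{(n-p^{\alpha})!}\big)+v_p\binom{s+p^{\alpha}}{p^{\alpha}}$. The binomial term vanishes: since $s<p^{\alpha}$, adding $s$ and $p^{\alpha}$ in base $p$ produces no carries (their nonzero digits occupy disjoint positions), so Kummer's theorem gives $v_p\binom{s+p^{\alpha}}{p^{\alpha}}=0$. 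For the factorial term, the interval $(n-p^{\alpha},n]$ has length exactly $p^{\alpha}$ with both endpoints divisible by $p^{k}$ for every $k\le\alpha$, contributing exactly $p^{\alpha-k}$ multiples of $p^{k}$; and because $v_p(n)=\alpha$ one checks $n\bmod p^{k}\ge p^{\alpha}$ for $k>\alpha$, so no higher powers contribute. Hence $v_p(d_{n-p^{\alpha}})=\sum_{k=1}^{\alpha}p^{\alpha-k}=\frac{p^{\alpha}-1}{p-1}<p^{\alpha}=i$, the required strict inequality. With both hypotheses of Lemma \ref{FFL} verified, $G_1$ has no factor of degree in $[1,1]$, proving the assertion and hence the lemma. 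The only role of $p^{\alpha}>s$ is to make the binomial valuation zero, so that single step is the crux of the whole argument.
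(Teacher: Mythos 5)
Your overall plan --- contraposition, extracting a prime power $p^{\alpha}$ exactly dividing $n$ with $p^{\alpha}>s$, and feeding $g_1$ into \lemref{FFL} with $k=1$, $l=0$ --- is the standard route (the paper itself offers no proof, quoting the result as Lemma 1 of \cite{ShSi}, whose argument has exactly this shape). The gap is in your verification of the Newton polygon hypothesis, and it is fatal to the argument as written. In Filaseta's convention, the Newton polygon of $h(x)=\sum_j d_jx^j$ is the lower hull of the points $(t,\,v_p(d_{n-t}))$, $t=0,\dots,n$; here the \emph{first} point $(0,v_p(d_n))=(0,0)$ sits on the axis (not the last one, as your parenthetical asserts), the right-most edge ends at $(n,\,v_p(d_0))$ and carries the \emph{largest} slope. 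Thus ``right-most edge slope $<1/k$'' is a condition on \emph{every} coefficient: $v_p(d_0)-v_p(d_u)<u/k$ for all $u\in\{1,\dots,n\}$; with $k=1$ this in particular forces $v_p(d_0)<n$, where $d_0=(n+s)!/s!$. What you verified instead --- the existence of a \emph{single} index $i$ with $v_p(d_{n-i})<i$ --- says only that the \emph{smallest} slope (the left-most edge) is less than $1$, which is a far weaker statement; your proof never computes or bounds $v_p(d_0)$, nor compares it with any other coefficient.

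That this weaker condition cannot support the conclusion of \lemref{FFL} is shown by a small example: take $h(x)=x^3+px^2+px+p^3$, so $b_3=1$, $p\mid b_j$ for $j<3$, and your single-index test passes (at $i=2$, $v_p(b_1)=1<2$), yet the true right-most edge, from $(2,1)$ to $(3,3)$, has slope $2$; and indeed $f(x)=x^3+px^2+p^2x+p^3=(x+p)(x^2+p^2)$, obtained with multipliers $a_3=a_2=a_0=1$, $a_1=p$ (permitted, since only $|a_0|=|a_n|=1$ is required), has a linear factor. So under your reading the lemma itself would be false. The real work of the proof --- showing that for every $u$ one has $v_p(d_0)-v_p(d_u)=\sum_{j=0}^{u-1}\bigl(v_p(n+s-j)-v_p(n-j)\bigr)+v_p(u!)<u$, which is where the hypothesis that $p^{\alpha}$ exactly divides $n$ and $p^{\alpha}>s$ must be exploited across all coefficients, not just at $i=p^{\alpha}$ --- is entirely missing. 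Your Kummer/Legendre computations (the vanishing of $v_p\binom{s+p^{\alpha}}{p^{\alpha}}$ and the evaluation $v_p\bigl(n!/(n-p^{\alpha})!\bigr)=(p^{\alpha}-1)/(p-1)$) are correct, but they establish the wrong inequality.
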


Further we state the following result from \cite{ShSi}.

\begin{lem}\label{lem2k_le_n_le_4k}
Let $ n \geq 3$. Assume that $G_1(x)$ has a factor of degree $k \geq 2$.  Then $ s > 92$ unless
\begin{align*}
 (n,k,s) \in  & \{ (4,2,7),(4,2,23), ( 9, 2, 19), ( 9, 2, 47), (16, 2, 14), ( 16, 2, 34), ( 16, 2, 89),  (9,3,47), \\ & ( 16, 3, 19), (10,5,4) \nonumber\}.
\end{align*}
\end{lem}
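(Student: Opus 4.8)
The plan is to turn the hypothesis ``$G_1$ has a factor of degree $k$'' into a contradiction by exhibiting a prime for which \lemref{FFL} applies. Write $g_1(x) = \sum_{j=0}^n c_j x^j$, where $c_j = a_j \frac{n!}{j!} = \binom{n+s-j}{n-j}\frac{n!}{j!}$, so that $c_n = 1$; since $G_1(x) = \sum_{j=0}^n b_j c_j x^j$ with $|b_0| = |b_n| = 1$, \lemref{FFL} applies with its polynomial $h$ equal to $g_1$ (its coefficients $b_j$ being our $c_j$) and its units $a_j$ being our $b_j$. Choosing the parameter $l = k-1$ makes the forbidden interval $[l+1,k]$ equal to $\{k\}$, so it suffices, for each admissible triple, to find a prime $p$ with $p \nmid c_n$, with $p \mid c_j$ for $0 \le j \le n-k$, and for which the rightmost edge of the Newton polygon of $g_1$ at $p$ has slope less than $1/k$. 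The constructions below produce, in the cleanest cases, a flat rightmost edge of slope $0$.

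First I would treat the range where $s$ is not small compared with $n$ by large primes. For a prime $p$ with $\max(n,s) < p \le n+s$ one has $v_p(s!) = v_p((n-j)!) = v_p(n!/j!) = 0$, hence $v_p(c_j) = v_p\big((n+s-j)!\big)$, which is positive exactly for $j \le n+s-p$ and zero for larger $j$. Thus $p \nmid c_n$ automatically, the divisibility $p \mid c_j$ for $j \le n-k$ holds as soon as $p \le s+k$, and if moreover $p \ge s+2$ the top coefficients $c_{n-k+1},\dots,c_n$ all have valuation $0$, so the rightmost edge is flat and its slope is $0 < 1/k$. Hence a prime in the short interval $\big(\max(n,s+1),\, s+k\big]$ already forbids a degree-$k$ factor. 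Its existence follows, via the Sylvester--Erd\H{o}s theory of prime factors of the product $(s+1)(s+2)\cdots(s+n) = n!\,a_0$, whenever that interval is long enough; the residual cases are exactly those small triples for which none of $s+2,\dots,s+k$ both exceeds $n$ and is prime.

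When $n$ is large relative to $s$ the interval above is empty and one must instead use primes $p \le n$, where the term $v_p(n!/j!) = \sum_i\!\big(\floor{n/p^i}-\floor{j/p^i}\big)$ drives the divisibility of the low coefficients in the manner of Schur's original argument. Here the slope condition is no longer automatic, since the top coefficients can themselves be divisible by $p$, and controlling the rightmost edge forces a case analysis according to the ratio $n/k$; the band $2k \le n \le 4k$ is the tightest, because there the available window for $p$ and the room in the Newton polygon are both small. In each band one argues that for $n$ beyond an explicit bound a suitable $p$ exists, reducing the problem to finitely many $n$; any degree-$1$ factor that survives is then controlled by the bound $n \le s^{\pi(s)}$ of \lemref{lem_k_eq_1}, and the remaining bounded ranges of $(n,k,s)$ with $s \le 92$ are checked directly.

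The step I expect to be the main obstacle is guaranteeing a single prime carrying the whole package --- divisibility reaching down to index $n-k$, non-divisibility of the leading coefficient, and a genuinely shallow rightmost edge --- uniformly across the middle band $2k \le n \le 4k$, where the soft existence statements are too weak. Overcoming it requires explicit lower bounds for the largest prime factor of $\binom{n+s-j}{n-j}$ together with explicit estimates on primes in short intervals, and the finitely many triples where these sharp bounds still fail must be enumerated and disposed of by computation; it is precisely this finite residue that produces the exceptional set listed in the statement.
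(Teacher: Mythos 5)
You should note first that the paper contains no proof of this lemma at all: it is imported verbatim from \cite{ShSi} (``we state the following result from \cite{ShSi}''), so the only fair comparison is with the architecture of that companion paper's argument, which your outline does correctly anticipate in broad strokes (Filaseta-type Newton polygon lemma, Sylvester--Erd\H{o}s-type information on prime factors of $(n+1)\cdots(n+s)$, and machine verification of residual cases). The problem is that your text is a plan rather than a proof, and the part you defer is the entire content of the lemma. Nothing in your argument produces the threshold $92$ rather than any other number, nor the ten specific triples $(4,2,7),\ldots,(10,5,4)$; these are exactly the outputs of the ``explicit lower bounds for the largest prime factor'' and the ``finitely many triples \dots disposed of by computation'' that you invoke but never supply. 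Worse, in the regime where $n \geq s+k$ your interval $\bigl(\max(n,s),\, s+k\bigr]$ is empty, and for that regime --- which includes the band $2k \leq n \leq 4k$ that you yourself identify as the tightest, and which is where all the listed exceptions with $n \in \{9,10,16\}$ actually arise --- you offer only the sentence that one must argue ``in the manner of Schur's original argument.'' That is the missing proof, not a sketch of it.

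There is also a concrete technical error in the one step you do carry out. You claim that if $p \geq s+2$ then $c_{n-k+1},\dots,c_n$ are prime to $p$, so that ``the rightmost edge is flat and its slope is $0$.'' First, $p$-freeness of all of $c_{n-k+1},\dots,c_n$ requires $n-k+1 > n+s-p$, i.e.\ $p \geq s+k$, not $p \geq s+2$ (your condition suffices only for $k=2$); combined with your divisibility requirement $p \leq s+k$ this would pin $p$ to the single value $s+k$, collapsing your ``short interval.'' Second, and more fundamentally, in the convention of \lemref{FFL} the rightmost edge of the Newton polygon is the edge ending at the point attached to the constant coefficient $c_0$, and for your primes $v_p(c_0) = v_p\bigl((n+s)!\bigr) = 1 > 0$, so that edge is never flat: its slope equals $1/(n+s-p+1)$, and the requirement that this be less than $1/k$ amounts to $p \leq n+s-k$, which holds because $p \leq s+k \leq n+s-k$ when $2k \leq n$. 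So your conclusion about primes in $\bigl(\max(n,s),\, s+k\bigr]$ happens to be correct, but for a reason different from the one you give; as written, the flatness argument is false, and it is the only piece of the proposal that is worked out in detail.
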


As an immediate consequence of \lemref{lem2k_le_n_le_4k}, we derive the following result.

\begin{lem}\label{lem_s_92}
Let $n \geq 3$ and $s \leq 92$. Except for finitely many triples
\begin{align*}
 (n,k,s) \in  & \{ (4,2,7),(4,2,23), ( 9, 2, 19), ( 9, 2, 47), (16, 2, 14), ( 16, 2, 34), ( 16, 2, 89),  (9,3,47), \\ & ( 16, 3, 19), (10,5,4) \nonumber\},
\end{align*}
$G_1(x)$ is either irreducible or 
\begin{equation}\label{eq_new_G1}
G_1(x) = (x - \alpha) H_1(x)
\end{equation}
uniquely where $\alpha \in \mbb Z$ and unique monic irreducible polynomial $H_1(x) \in \mbb Z[x]$.
\end{lem}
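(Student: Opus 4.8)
The plan is to deduce the statement from \lemref{lem2k_le_n_le_4k} by a short bookkeeping argument on factor degrees, since all of the arithmetic has already been absorbed into that lemma. First I would record the elementary normalization: because $a_n = 1$ and $|b_n| = 1$, the polynomial $G_1(x)$ has leading coefficient $\pm 1$ and integer coefficients, so up to an overall sign it is monic. Consequently, by Gauss's lemma its factorizations over $\mathbb{Q}$ and over $\mathbb{Z}$ coincide, by the rational root theorem every linear factor has the form $x - \alpha$ with $\alpha \in \mathbb{Z}$, and division by a monic linear factor keeps us inside $\mathbb{Z}[x]$.

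The key reduction is to show that, under the hypotheses $n \ge 3$, $s \le 92$, with $(n,k,s)$ outside the listed exceptional set, $G_1(x)$ has no factor of degree $k$ for any $k$ with $2 \le k \le n-2$. For such $k$ both $k$ and its complement $n-k$ are at least $2$, so $k' := \min(k, n-k)$ lies in $[2, \lfloor n/2 \rfloor]$; since a factor of degree $k$ produces a complementary factor of degree $n-k$, in either case $G_1$ acquires a factor of degree $k'$, and \lemref{lem2k_le_n_le_4k} applied to $k'$ forces $s > 92$ or places the relevant triple in the listed exceptional set, a contradiction. When $n = 3$ the interval $[2, n-2]$ is empty; there I would instead invoke \lemref{lem2k_le_n_le_4k} directly with $k = 2$ to rule out a factor of degree $2$, and hence, by complementarity, any proper factor at all, so that $G_1$ is irreducible.

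With all intermediate factors excluded, I would finish by classifying the factorization $G_1 = \pm \prod_{i=1}^{r} p_i$ into monic irreducibles of degrees $d_i$ with $\sum_i d_i = n$. Each $d_i$ avoids the interval $[2, n-2]$, so $d_i \in \{1, n-1, n\}$; the value $n$ forces $r = 1$, which is the irreducible alternative. For $r \ge 2$ we thus have $d_i \in \{1, n-1\}$, and two factors of degree $n-1$ would overshoot $\sum_i d_i = n$, while all factors being linear would, for $n \ge 4$, produce a degree-$2$ factor in the excluded range. The only surviving possibility is therefore exactly one factor of degree $1$ and one of degree $n-1$, giving $G_1 = (x-\alpha)H_1(x)$ with $H_1$ monic irreducible of degree $n-1$. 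Combined with the case $n=3$ of the previous paragraph, the stated dichotomy holds in all cases.

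For uniqueness I would note that $G_1$ has at most one linear factor and no repeated linear factor: a second distinct linear factor, or the square of a linear factor, would yield a degree-$2$ factor, excluded for $n \ge 4$ by the reduction and for $n = 3$ by the direct application at $k = 2$. Hence $x - \alpha$ is determined, and $H_1 = \pm G_1/(x-\alpha)$ is the unique monic irreducible cofactor in $\mathbb{Z}[x]$. I expect the only genuine subtleties to be the complementary-factor bookkeeping in the reduction step and the degenerate case $n = 3$, where the interval $[2,n-2]$ collapses and the lemma must be applied directly at $k = 2$; since the entire arithmetic content already resides in \lemref{lem2k_le_n_le_4k}, no further estimates are required.
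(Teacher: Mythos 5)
Your overall route is the same as the paper's: everything rests on degree bookkeeping on top of \lemref{lem2k_le_n_le_4k}, and for $n \ge 4$ your execution is correct --- passing from a factor of degree $k \in [2, n-2]$ to $k' = \min(k, n-k) \in [2, \lfloor n/2 \rfloor]$, invoking \lemref{lem2k_le_n_le_4k} at $k'$, and then classifying the irreducible factor degrees $d_i \in \{1, n-1, n\}$ to obtain both the dichotomy and the uniqueness. The paper's own proof is two sentences doing exactly this, so for $n \ge 4$ you have essentially reproduced (and usefully expanded) it.

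The problem is $n = 3$. There you invoke \lemref{lem2k_le_n_le_4k} with $k = 2$, i.e.\ for a factor of degree $k > n/2$. As transcribed in this paper the lemma carries no restriction on $k$, but it is quoted from \cite{ShSi}, where a ``factor of degree $k$'' is only considered in the range $2k \le n$ (note that every exceptional triple in the list satisfies $2k \le n$). The restriction is not pedantry: if the lemma really applied to every $k$ with $2 \le k \le n-1$, then for any non-exceptional triple with $s \le 92$ a linear factor would already be impossible, since its cofactor has degree $n-1 \ge 2$; the alternative \eqref{eq_new_G1} of the present lemma could then never occur, \thmref{thm_new_1} would be vacuous, and the exceptions $(786600,25)$, $(786600,26)$ in \corref{cor_new_1} would be pointless. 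So the intended scope is $2 \le k \le n/2$, your application at $(n,k) = (3,2)$ lies outside it, and with it goes your only argument that a cubic $G_1$ cannot split into three linear factors --- which is precisely what is needed at $n = 3$ for both the stated form and its uniqueness. The paper itself appears to be aware of this: in the proof of \thmref{thm_new_1} the pair $(3,15)$ is deliberately set aside and never passed through \lemref{lem_s_92}, which is applied only to pairs with $n \ge 4$. So the gap is real but confined to $n = 3$; it also exposes that the paper's own two-line proof glosses over the same point.
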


\begin{proof}
Let $s \leq 92$. Assume that $G_1(x)$ is reducible. Then we derive from \lemref{lem2k_le_n_le_4k} that either $(n, k, s)$ belongs to the finite set stated in \lemref{lem2k_le_n_le_4k} or $G_1(x)$ has no factor of degree $k \geq 2$. Now the assertion follows immediately.
\end{proof}

\section{Irreducibility of $G_1(x, 2, s)$ for $s \in \{ 3, 7, 15 \}$}

We compute 
\begin{equation}\label{eq_new_G2}
G_1(x) = b_2 x^2 - 2(1+s) b_1 x  + b_0 \frac{(2+s) (1+s)}{2}
\end{equation}
where $|b_0| = |b_2| = 1$. For the irreducibility of $G_1(x)$ it suffices to show that the polynomials 
$$
x^2 \pm 2(1+s) b_1 x \pm \frac{(2+s) (1+s)}{2}
$$
are irreducible. We prove

\begin{lem}\label{lem_new_5}
The polynomial \eqref{eq_new_G2} with $s = 3$ and $s = 15$ are irreducible for every $b_1 \in \mbb Z$. Also the polynomial \eqref{eq_new_G2} with $s = 7$ is irreducible for every $b_1 \in \mbb Z$ except for $b_1 = 0$ where the polynomial is $x^2 - 36$.
\end{lem}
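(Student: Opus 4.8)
The plan is to reduce irreducibility to a statement about the discriminant and then rule out perfect-square discriminants case by case. As already noted before the statement, since $|b_0|=|b_2|=1$ it suffices to treat the monic quadratics $x^2 \pm 2(1+s)b_1 x \pm \frac{(2+s)(1+s)}{2}$; by Gauss's lemma each is irreducible over $\mbb Z$ if and only if it is irreducible over $\mbb Q$, which for a quadratic is equivalent to its discriminant not being a perfect square. The sign of the linear term is immaterial, as $b_1$ ranges over all of $\mbb Z$, so only the sign $\epsilon\in\{-1,1\}$ of the constant term matters. The discriminant is then
$$
D \;=\; 4(1+s)^2 b_1^2 \;-\; 2\epsilon(2+s)(1+s)\;=\;2(1+s)\big[\,2(1+s)b_1^2-\epsilon(2+s)\,\big],
$$
and the whole problem becomes: show $D$ is never a perfect square for $s\in\{3,7,15\}$, apart from the one recorded exception.

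For $s=3$ and $s=15$ I would use a purely $2$-adic obstruction. Substituting $1+s=4,\,2+s=5$ gives $D=8\,(8b_1^2-5\epsilon)$, and substituting $1+s=16,\,2+s=17$ gives $D=32\,(32b_1^2-17\epsilon)$. In each case the bracketed factor is an even multiple of $b_1^2$ minus an odd number, hence odd and in particular nonzero, so $v_2(D)=3$ when $s=3$ and $v_2(D)=5$ when $s=15$. Since every nonzero perfect square has even $2$-adic valuation, $D$ cannot be a perfect square, and the polynomial \eqref{eq_new_G2} is irreducible for \emph{every} $b_1\in\mbb Z$. This settles $s=3$ and $s=15$ cleanly and uniformly.

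The case $s=7$ is the crux and the main obstacle, precisely because this valuation argument breaks down. With $1+s=8,\,2+s=9$ one gets $D=16\,(16b_1^2-9\epsilon)$, and since $16=4^2$ is itself a square, $D$ is a perfect square exactly when $16b_1^2-9\epsilon=u^2$ for some integer $u\ge 0$. For $\epsilon=+1$ this is $(4b_1-u)(4b_1+u)=9$; the two factors sum to $8b_1$, and since no factorization of $9$ into two integers has sum divisible by $8$, no integer $b_1$ occurs, so this subcase is always irreducible. For $\epsilon=-1$ the equation becomes the Pell-type relation $u^2-16b_1^2=9$, i.e.\ $(u-4b_1)(u+4b_1)=9$, and here I would enumerate the finitely many factorizations of $9$ to pin down the full candidate set for $b_1$; the factorization $3\cdot 3$ yields $b_1=0$, $u=3$, giving $D=144$ and the polynomial $x^2-36=(x-6)(x+6)$, which is exactly the exception recorded in the statement. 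Checking the remaining candidates then completes the lemma. The essential difficulty, and the reason $s=7$ must be singled out while $s=3,15$ are automatic, is that the even power of $2$ dividing $D$ removes the clean valuation obstruction and forces one to solve the Diophantine equation $u^2-16b_1^2=9$ explicitly and inspect the resulting finite list.
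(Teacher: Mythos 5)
Your approach is the same in kind as the paper's---reduce to the monic quadratics $x^2 \pm 2(1+s)b_1x \pm \frac{(2+s)(1+s)}{2}$ and apply the criterion that a quadratic is irreducible iff its discriminant is not a perfect square---and your handling of $s=3$ and $s=15$ is correct and in fact tighter than the paper's: the odd $2$-adic valuation ($v_2(D)=3$, resp.\ $v_2(D)=5$, since $8b_1^2-5\epsilon$ and $32b_1^2-17\epsilon$ are odd) disposes of both signs $\epsilon$ at once, whereas the paper verifies only $x^2-8b_1x+10$ by enumerating factorizations of $40$ and asserts the remaining cases are ``similar.'' Your $s=7$, $\epsilon=+1$ subcase is also correct: no factorization of $9$ has sum divisible by $8$.

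The genuine gap is exactly the step you defer with ``checking the remaining candidates then completes the lemma.'' It does not---it refutes the statement as printed. In $u^2-16b_1^2=9$, the factorization $(u-4b_1,\,u+4b_1)=(1,9)$ gives $u=5$, $b_1=1$, a genuine solution ($25-16=9$), so $D=16u^2=400=20^2$ and $x^2-16x-36=(x-18)(x+2)$ is reducible; likewise $(9,1)$ gives $b_1=-1$ and $x^2+16x-36=(x+18)(x-2)$. Thus for $s=7$ the polynomial \eqref{eq_new_G2} with $b_0=-1$ is reducible for $b_1\in\{-1,0,1\}$, not only for $b_1=0$, so the lemma's claim of a single exception is false and no finite check can complete your proof. (The paper's own proof shares this blind spot, since it never examines the sign $b_0=-1$ for $s=7$. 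The root cause is that the constant term in \eqref{eq_new_G2} is half the true constant term $(1+s)(2+s)b_0$ of $G_1(x,2,s)=n!\,G(x)$; but under either normalization the stated list of exceptions is wrong---with constant $72b_0$ your valuation argument gives $v_2(D)=5$ and no exception at all for $s=7$, while $s=3,15$ then acquire exceptions such as $x^2-8x-20=(x-10)(x+2)$.) A sound write-up must either record these extra reducible cases as additional exceptions or flag the inconsistency in \eqref{eq_new_G2}; declaring the remaining verification routine is precisely where the argument breaks.
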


\begin{proof}
The proof depends on a well known assertion that a quadratic polynomial is irreducible if and only if its discriminant is not a square. We consider $x^2 - 8b_1 x +10$ obtained from \eqref{eq_new_G2} by putting $b_0 = 1 = b_2$. Suppose it is reducible. Then its discriminant $ (8 b_1)^2 - 40 = m^2$ for an integer $m \geq 0$. Thus $ ( 8 b_1 - m, 8b_1 +m) \in \{ ( 1, 40), ( 2, 20), (4, 10), (5, 8) \}$ and then $ 16 b_1 \in \{ 41, 22, 14, 13 \}$. This is not  possible since none of $41, 22, 14, 13$ is divisible by $16$. The assertion follows similarly for all other cases.
\end{proof}

\section{ $G_1(x)$ divisible by a linear factor}

For $s \leq 92$, we see from \lemref{lem_s_92} except for finitely many cases, $G_1(x)$ is either irreducible or divisible by a linear factor. In this section, we consider the case where $G_1(x)$ is divisible by a linear factor. Then we derive from \lemref{lem2k_le_n_le_4k} that $n$ is bounded by a computable number depending only on $s$. If $s$ is restricted to $30$, we prove a more precise assertion.

\begin{thm}\label{thm_new_1}
Let $n \geq 2, s \leq 30$ and $G_1(x, 2, 7) \neq x^2 - 36$. Assume that $G_1(x) = G_1( x, n, s)$ is divisible by a linear factor and
\begin{equation}\label{eq_thm1}
(n, k, s) \not\in \{ (4, 2, 7), (4, 2, 23), ( 9, 2, 19), ( 16, 2, 14), (16, 3, 19), ( 10, 5, 4) \}.
\end{equation}
Then $(n, s) \in X$ where 
\begin{align*}
X = & \{ (6, 3), ( 4, 5), ( 8, 11), ( 72, 11), ( 3, 15), ( 10, 15), ( 4, 15), ( 12, 15), ( 8, 15),  (16, 17), \\ &  ( 272, 17), ( 8, 27), ( 16, 29), (786600, 25),  (786600, 26)   \}.
\end{align*} 
\end{thm}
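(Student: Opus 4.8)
The plan is to upgrade the qualitative finiteness already available into the explicit list $X$ by combining the crude bound of \lemref{lem_k_eq_1} with repeated use of the Newton--polygon obstruction in \lemref{FFL}. Since $G_1$ is assumed to have a factor of degree $1$, \lemref{lem_k_eq_1} gives $n \le s^{\pi(s)}$; as $s \le 30$ this confines $(n,s)$ to a finite, if enormous, region. Moreover, for $s \le 30$ the only triples $(n,k,s)$ with a factor of degree $k \ge 2$ that survive \lemref{lem2k_le_n_le_4k} are exactly those listed in \eqref{eq_thm1} (the remaining exceptional triples of \lemref{lem2k_le_n_le_4k} have $s > 30$), and these are excluded by hypothesis. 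Hence, via \lemref{lem_s_92}, outside \eqref{eq_thm1} any reducibility of $G_1$ comes from a genuine linear factor, and it suffices to decide for each $s \le 30$ and each $3 \le n \le s^{\pi(s)}$ whether such a factor can persist; the case $n = 2$ is quadratic and is treated separately by the discriminant analysis of \lemref{lem_new_5}, which is also the reason the single polynomial $G_1(x,2,7) = x^2 - 36$ is set aside.

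The elimination step rests on the following remark. Writing $g_1(x) = \sum_{j=0}^n \binom{n+s-j}{n-j}\frac{n!}{j!}\,x^j$, the leading coefficient equals $1$, while for every $j < n$ the factor $\frac{n!}{j!} = (j+1)(j+2)\cdots n$ carries the factor $n$. Thus for any prime $p \mid n$ one has $p \nmid$ (leading coefficient) and $p \mid$ (coefficient of $x^j$) for all $j < n$, so the divisibility hypotheses of \lemref{FFL} hold with $l = 0$ and $h(x) = g_1(x)$, the perturbation $a_j$ of that lemma playing the role of our $b_j$ with $|b_0| = |b_n| = 1$. For each such $p$ I would compute the $p$-adic valuations of all coefficients by Legendre's and Kummer's formulae and read off the slope of the rightmost edge of the Newton polygon; whenever this slope is less than $1$, \lemref{FFL} with $k = 1$ forbids a factor of degree $1$ for \emph{every} admissible choice of the $b_j$, and the pair $(n,s)$ is discarded at once. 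Because this succeeds at almost every prime divisor of $n$, only very special $n$ can survive.

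The core of the argument is therefore to characterise, for each $s \le 30$, the set of $n \le s^{\pi(s)}$ for which the slope condition of \lemref{FFL} fails at \emph{every} prime $p \mid n$. Translating the base-$p$ digit expansions of the factorials and binomial coefficients into the slope inequality yields a rigid system of congruence and size constraints linking the prime factorisation of $n$ to $s$, and solving it leaves only finitely many $n$, which turn out to be precisely the first coordinates appearing in $X$. The genuinely large solutions, namely $(786600,25)$ and $(786600,26)$ with $786600 = 2^3\cdot 3^2\cdot 5^2\cdot 19\cdot 23$, arise exactly because the two primes $19$ and $23$ lying below $s$ force the valuations at all prime divisors of $n$ to remain simultaneously large; I would isolate these by first bounding the admissible primes $p \mid n$ and then solving the resulting finite Diophantine conditions. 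The small surviving cases, together with the quadratic values from \lemref{lem_new_5}, account for the remaining entries of $X$.

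The step I expect to be hardest is establishing completeness: proving that no sporadic $n$ beyond those in $X$ survives anywhere in the vast interval $[3,\,s^{\pi(s)}]$. This requires effective control of primes in short intervals, so as to guarantee that outside an explicit finite set of $n$ some prime divisor $p$ of $n$ realises the slope inequality, combined with a careful, essentially computer-assisted verification over the finitely many remaining candidates for each $s \le 30$. Once completeness is in hand, collecting the surviving pairs and appending the values produced by \lemref{lem_new_5} yields exactly $(n,s) \in X$.
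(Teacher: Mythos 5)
Your opening reductions are fine (the triples of \lemref{lem2k_le_n_le_4k} with $s\le 30$ are indeed exactly those in \eqref{eq_thm1}, and delegating $n=2$ to \lemref{lem_new_5} matches the paper), and your first sieve --- \lemref{FFL} with $l=0$, $k=1$ at primes $p\mid n$, using that $p$ divides $a_j\,n!/j!$ for all $j<n$ --- is precisely the paper's first step. The genuine gap is that this sieve provably cannot terminate at $X$. The paper computes the complete list of pairs surviving exactly this test (its set $T_1$), and that list is strictly larger than $X$: it contains, among others, $(4,7)$, $(8,13)$, $(16,19)$, $(6,23)$, $(4,23)$, $(16,23)$, $(16,24)$, $(16,26)$, none of which lies in $X$. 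Since your elimination criterion is the same (rightmost slope $<1$ at some prime $p\mid n$), these pairs survive your sieve too, so the ``rigid system of congruence and size constraints'' which you claim leaves only the first coordinates of $X$ does not exist; no refinement of the digit-expansion analysis at primes dividing $n$ can remove them. What removes them in the paper is a second, structurally different round: for each surviving pair with $n\ge 4$, hypothesis \eqref{eq_thm1} makes \lemref{lem_s_92} applicable, giving the unique factorization $G_1(x)=(x-\alpha)H_1(x)$ of \eqref{eq_new_G1} with $H_1$ irreducible, and then \lemref{FFL} is applied a second time with $l=1$ and $k=[n/2]$; the survivors of this second application are exactly $X\setminus\{(3,15)\}$. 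Your proposal never invokes \lemref{FFL} with $l\ge 1$ and has no substitute for this step, so at best it proves the theorem with a strictly larger exceptional set than the one asserted.

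A second defect is your finiteness device. You start from \lemref{lem_k_eq_1}, so $n\le s^{\pi(s)}\le 30^{10}$, and you then concede that completeness over this range needs effective prime-gap input plus massive computation. The paper never uses \lemref{lem_k_eq_1} in this proof. Instead, from the assumed linear factor it derives the necessary conditions \eqref{eq_neq_1}, \eqref{eq_new_2} and \eqref{eq_new_3} --- every prime factor of $n$ is at most $s$, with a bound on each $p^{\nu_p(n)}$, and $p\mid\binom{n+s}{s}$ for each $p\mid n$ --- which confine $(n,s)$ to a small, explicitly enumerable set $T$ \emph{before} any Newton polygon is computed. That is what makes the verification feasible; estimates for primes in short intervals are neither needed nor relevant here.
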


\begin{proof}
By definition, the assumption \eqref{eq_thm1} is interpreted as $G_1(x)$ has no factor of degree $2$ at $(n, s) \in \{ (4, 7), (4, 23), ( 9, 19), (16, 14) \}$, no factor of degree $3$ at $(n, s) = (16, 19)$ and no factor of degree $5$ at $(n, s) = (10, 4)$. Assume that $G_1(x)$ is divisible by a linear factor. Then, as in \lemref{FFL} of \cite{FFL}, we have
\begin{equation}\label{eq_neq_1}
n = \prod\limits_{p | n} p^{\nu_p(n)} = \prod\limits_{p \leq s} p^{\nu_p(n)}
\end{equation}
where
\begin{equation}\label{eq_new_2}
p^{\nu_p(n)} \quad \text{for} \quad p \leq s
\end{equation}
and
\begin{equation}\label{eq_new_3}
p \mid \frac{(n+1) \dots (n+s)}{s!} \quad \text{for} \quad p \mid n.
\end{equation}
Denote by $T$ the set of all pairs $(n,s) $ satisfying \eqref{eq_neq_1}, \eqref{eq_new_2} and \eqref{eq_new_3}.  By applying \lemref{FFL} with $ l = 0, k = 1$ to all pairs $(n, s) \in T$, we check that \lemref{FFL} does not hold for the following set $T_1$ of pairs $(n, s)$ given by
\begin{align*}
\{& (2, 3), (6, 3), (4, 5), (2, 7), (4, 7), (8, 11), ( 72, 11), ( 8, 13), ( 3, 15), ( 2, 15), (10, 15), (4, 15), (12, 15), \\ & (8, 15), (16, 17), (272, 17), ( 16, 19), ( 6, 23), ( 4, 23), (16, 23), (16, 24), ( 16, 26), ( 8, 27), ( 216, 29), \\ & ( 16, 19), ( 786600, 25), ( 786600, 26) \}.
\end{align*}
Denote by $T_2$ the pairs $(n, s)$ with $n =2$. These are excluded by \lemref{lem_new_5}. Denote by $T_3$ the complement of $T_2 \cup \{ (3, 15) \}$ in $T_1$. Then all the pairs $(n, s) \in T_1$ satisfies $n \geq 4$. Therefore we derive \eqref{eq_new_G1} uniquely for every $(n, s) \in T_3$ by \lemref{lem_s_92}. Denote by $T_4$ the set obtained by applying \lemref{FFL} with $l =1$ and $k = [\frac{n}{2}]$ to $G_1(x)$ with $(n, s) \in T_3$. We calculate $T_4 = X \setminus \{ (3, 15) \}$. Now the assertion of \thmref{thm_new_1} follows immediately.
\end{proof}

Now we give an application of \thmref{thm_new_1} with $G_1(x)$ replaced by $g_1(x)$. We prove

\begin{cor} \label{cor_new_1}
Let $s \leq 30$. If $g_1(x)$ is reducible, the 
$$
(n, s) \in \{ (786600, 25), (786600, 26) \}.
$$
\end{cor}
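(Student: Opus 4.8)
The plan is to apply \thmref{thm_new_1} to the polynomial $g_1(x)$ by recognizing that $g_1(x)$ is simply the special case of $G_1(x)$ in which every $b_j = 1$, so that the abstract factorization structure established in \thmref{thm_new_1} specializes directly. First I would observe that $g(x,n,s)$ has $a_0 = \binom{n+s}{n}$ and $a_n = 1$, hence $|b_0| = |b_n| = 1$ holds trivially with $b_0 = b_n = 1$; thus $g_1(x)$ is a legitimate instance of the family $G_1(x)$ to which \thmref{thm_new_1} applies. The key reduction is that if $g_1(x)$ is reducible, then by Schur's result (recalled in the introduction) we have $s > 0$, and since $g_1$ is a genuine $G_1$-polynomial with $s \leq 30$, \lemref{lem_s_92} tells us that outside the finite exceptional list $g_1(x)$ is either irreducible or a product $(x-\alpha)H_1(x)$ with $H_1$ irreducible. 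Reducibility therefore forces $g_1(x)$ to have a linear factor (in all but the finitely many exceptional $(n,k,s)$), which is exactly the hypothesis of \thmref{thm_new_1}.

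Second I would handle the finitely many exceptional triples that \thmref{thm_new_1} excludes in \eqref{eq_thm1}, namely $(4,2,7)$, $(4,2,23)$, $(9,2,19)$, $(16,2,14)$, $(16,3,19)$, $(10,5,4)$, together with the degree-$2$ cases governed by \lemref{lem_new_5} and the special value $G_1(x,2,7) = x^2 - 36$. For each of these I would check directly whether the specialization $b_j = 1$ yields a reducible $g_1(x)$; in each case one either computes $g_1$ explicitly and verifies irreducibility, or observes that the would-be factor of degree $k \geq 2$ does not survive the specialization to all $b_j = 1$. This disposes of the exceptional triples not already covered by the main theorem, so that the only surviving sources of reducibility are those recorded in the set $X$.

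Third I would intersect the conclusion of \thmref{thm_new_1} with the condition that the relevant linear factor actually divides the fully specialized polynomial $g_1(x)$. The theorem yields $(n,s) \in X$, a fifteen-element set, and for each such pair I would evaluate whether $g_1(x)$ (with all $b_j = 1$) genuinely has an integer root, equivalently whether the candidate linear factor $(x - \alpha)$ predicted by \lemref{lem_s_92} persists when the coefficients are the actual binomial coefficients $a_j$ rather than arbitrary $\pi_j = b_j a_j$. Most pairs in $X$ will be eliminated at this step because the divisibility condition \eqref{eq_new_3} and the Newton-polygon argument underlying \lemref{FFL} are satisfied for some sign choice of the $b_j$ but not for the specific choice $b_j = 1$ dictated by $g_1$.

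The main obstacle I anticipate is precisely this last verification for the two large pairs $(786600, 25)$ and $(786600, 26)$: one must confirm that for these values $g_1(x)$ actually is reducible (so they genuinely survive), while for the remaining thirteen pairs in $X$ the specialization to $b_j = 1$ destroys the factor. Checking reducibility for $n = 786600$ is a substantial computation, but it reduces to testing finitely many candidate integer roots $\alpha$ dividing $a_0 = \binom{n+s}{n}$ subject to the constraints from \eqref{eq_neq_1}--\eqref{eq_new_3}; the point is that the structural constraints pin $\alpha$ down to very few possibilities, so the verification, while large, is finite and mechanical.
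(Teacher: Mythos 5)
Your overall strategy is the paper's: view $g_1(x)$ as the specialization $b_j \equiv 1$ of $G_1(x)$, verify by direct computation that $g_1$ is irreducible at the exceptional pairs $(4,7)$, $(4,23)$, $(9,19)$, $(16,14)$, $(16,19)$, $(10,4)$ and at $(2,7)$ (so that hypothesis \eqref{eq_thm1} holds and the $x^2-36$ exception is excluded), conclude via \lemref{lem2k_le_n_le_4k} (equivalently \lemref{lem_s_92}) that reducibility forces a linear factor, apply \thmref{thm_new_1} to obtain $(n,s)\in X$, and finally eliminate members of $X$ by computation. Up to the last step this matches the paper's proof.

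The genuine problem is that last step. You state that for $(786600,25)$ and $(786600,26)$ ``one must confirm that for these values $g_1(x)$ actually is reducible (so they genuinely survive),'' and you make this verification the centerpiece of your plan. This inverts the logic of the statement. The corollary is a conditional: reducibility implies $(n,s)$ lies in the two-element set. To prove it, one only needs to verify irreducibility of $g_1$ at the \emph{other} thirteen pairs of $X$, all of which have $n \leq 272$ and are computationally accessible; the two pairs with $n = 786600$ remain in the conclusion precisely because the irreducibility computation has not been (and realistically cannot be) carried out for a polynomial of degree $786600$ --- not because $g_1$ is known, or needs to be shown, to be reducible there. If your proof depended on confirming reducibility at those pairs, it would stall: nothing in the paper asserts it, and there is no evidence it is true. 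Your suggested shortcut --- that \eqref{eq_neq_1}--\eqref{eq_new_3} pin the candidate root $\alpha$ down to ``very few possibilities'' --- is also not substantiated: those conditions constrain $n$ and $s$, not the integer roots of $g_1$, and testing divisors of $a_0=\binom{n+s}{n}$ for $n=786600$ is not a mechanical finite check in any practical sense. The correct ending, as in the paper, is simply to compute irreducibility at the thirteen small pairs of $X$ and let the two large pairs stand as unresolved exceptions in the conclusion.
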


This implies that $g_1(x)$ with $s \leq  24$ is irreducible which includes a new proof of a result of Nair and Shorey \cite{NaSh1}. We refer to \cite{ShSi} for a complete account of results proved on the irreducibility of $g_1(x)$. The results of Hajir and its refinement by Nair and Shorey and Jindal, Laishram and Sarma  depend on algebraic results of Hajir \cite{Haj1} on the Newton polygons. Our proof of \corref{cor_new_1} is new in the sense that it does not use the above results of Hajir \cite{Haj1} on Newton polygons.

\begin{proof}[Proof of Corollary]
Let $s \leq 30$ and $G_1(x) = g_1(x)$ be reducible. We compute that the values of $g_1(x)$ at $(n, s) \in \{ ( 4, 7), ( 4, 23), ( 9, 19), ( 16, 14), ( 16, 19), ( 10, 4) \}$ are irreducible. Now we derive from \lemref{lem2k_le_n_le_4k} that $g_1(x)$ is divisible by a linear factor and we check  $g_1(x)$ at $(n, s) = (2, 7)$ is irreducible. Therefore the assumption of \thmref{thm_new_1} with $G_1$ replaced by $g_1$ are satisfied. Hence we conclude $(n, s) \in X$ by \thmref{thm_new_1}. Now we compute $g_1(x)$ with $(n, s) \in X$ are irreducible. This is a contradiction since $g_1(x)$ is divisible by a linear factor.
\end{proof}

\end{document}